\documentclass[a4paper]{amsart}

\def\gam{\gamma }

\newcommand{\set}[1]{\{#1\}}%set

\newcommand{\remove}[1]{ }

\newtheorem{theorem}{Theorem}[section]
\newtheorem{proposition}[theorem]{Proposition}
\newtheorem{lemma}[theorem]{Lemma}

\theoremstyle{definition}
\newtheorem*{definition}{Definition}

\theoremstyle{remark}
\newtheorem*{remark}{Remark}
\newtheorem*{remarks}{Remarks}

\numberwithin{equation}{section}

\begin{document}
\title[Expansions in non-integer bases]{Greedy and quasi-greedy expansions in non-integer bases}
\author{Claudio Baiocchi}
\address{Via regina Margherita 43, 00030 Gavignano, Italy}
\email{claudio.baiocchi-p314@poste.it}
\author{Vilmos Komornik}
\address{D\'epartement de math\'ematique\\
         Universit\'e Louis Pasteur\\
         7, rue Ren\'e Descartes\\
         67084 Strasbourg Cedex, France}
\email{komornik@math.u-strasbg.fr}
%\thanks{}
%\author{}
%\address{}
%\curraddr{}
%\email{}
%\subjclass{}
%\keywords{}
\date{\today}
%\thanks
%\dedicatory

\begin{abstract}
We generalize several theorems of R\'enyi, Parry, Dar\'oczy and K\'atai by characterizing the greedy and quasi-greedy expansions in non-integer bases.
\end{abstract}

\maketitle

\section{Introduction}\label{s1}

Fix a positive integer $M$. By a \emph{sequence} we mean a sequence $(c_i)=c_1c_2\ldots$ satisfying $c_i\in\set{0,1,\ldots,M}$ for each $i$. A sequence is called {\em finite} if it contains only finitely many nonzero terms; otherwise it is called {\em infinite}.

Given a real number $q>1$ and a nonnegative real number $x$, by an \emph{expansion} of  $x$ we mean a sequence $(c_i)$ satisfying \begin{equation*}
\frac{c_1}{q}+ \frac{c_2}{q^2}+\cdots =x.
\end{equation*}
This can only happen if $x\in [0,M/(q-1)]$ because
\begin{equation*}
0\le \frac{c_1}{q}+ \frac{c_2}{q^2}+\cdots\le \frac{M}{q}+ \frac{M}{q^2}+\cdots =\frac{M}{q-1}.
\end{equation*}

If $q\le M+1$, then the converse statement also holds: every $x\in [0,M/(q-1)]$ has at least one expansion. More precisely, we will show that 
every $x\in [0,M/(q-1)]$ has a lexicographically largest expansion and every $x\in (0,M/(q-1)]$ has a lexicographically largest \emph{infinite} expansion. The \emph{lexicographical} order between sequences is defined in the usual way: we write
\begin{equation*}
a<b,\quad (a_i)<(b_i)\quad \text{or}\quad a_1a_2\ldots <b_1b_2\ldots
\end{equation*}
if there exists an index $n\ge 1$ such that
\begin{equation*}
a_1\ldots a_{n-1}=b_1\ldots b_{n-1}\quad \text{and}\quad a_n<b_n.
\end{equation*}
Furthermore, we write
\begin{equation*}
a\le b,\quad (a_i)\le (b_i)\quad \text{or}\quad a_1a_2\ldots \le b_1b_2\ldots
\end{equation*}
if we also allow the equality of the two sequences.

We will give two simple algorithms for the construction of these special expansions, we will characterize them algebraically, and we will compare them. Finally, we also study the limiting case $M=\infty$ when every sequence of nonnegative integers is admitted as a possible expansion.

The results of this note extend various former theorems due to R\'enyi \cite{Ren1957}, Parry \cite{Par1960}, Dar\'oczy and K\'atai \cite{DarKat1995} (see also \cite{KomLor116}) and they have been already applied in the study of unique expansions \cite{KomLor125}, \cite{DevKom2006} and of expansions with deleted digits by Pedicini \cite{Ped}.

\section{Quasi-greedy expansions}\label{s2}

Fix a positive integer $M$ and a real number $q>1$. If $(c_i)=c_1c_2\ldots$ is an infinite expansion of $x$, then
\begin{equation*}
0< \frac{c_1}{q}+ \frac{c_2}{q^2}+\cdots\le \frac{M}{q}+ \frac{M}{q^2}+\cdots =\frac{M}{q-1},
\end{equation*}
so that $x\in (0,M/(q-1)]$. 

In order to prove a converse statement, let us introduce for each $x>0$ the
lexicographically largest infinite sequence $(a_i)$ satisfying
\begin{equation}\label{21}
\frac{a_1}{q}+ \frac{a_2}{q^2}+\cdots \le x.
\end{equation}
This is equivalent to the following recursive definition: if $a_k$ has already
been defined for all $k<n$ (no assumption if $n=1$), then let $a_n$ be the
largest integer satisfying the inequalities
\begin{equation*}
a_n\le M
\quad\text{and}\quad
\frac{a_1}{q}+\cdots + \frac{a_n}{q^n}< x.
\end{equation*}
Since $x>0$, the definition is correct. We have the following

\begin{proposition}\label{p21}
If $M\ge q-1$ and $0< x\le M/(q-1)$, then $(a_i)$ is an infinite expansion of $x$.
\end{proposition}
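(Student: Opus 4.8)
The plan is to argue directly from the recursive definition of $(a_i)$, writing $S_n=a_1/q+\cdots+a_n/q^n$ for the partial sums (with $S_0=0$) and $r_n=x-S_n$ for the remainders. First I would record the facts that come for free from the construction: each $a_n$ lies in $\set{0,1,\ldots,M}$ — the digit $0$ is always admissible since $S_{n-1}<x$, so the largest admissible digit is nonnegative — and, crucially, $S_n<x$, i.e.\ $r_n>0$, for every $n$. As $a_n\ge 0$, the sequence $(r_n)$ is non-increasing and bounded below by $0$, hence converges to some $\delta\ge 0$, and $r_n\ge\delta$ for all $n$. The whole expansion claim then reduces to showing $\delta=0$, since this gives $\sum a_i/q^i=\lim S_n=x$.

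The key observation, which I would isolate as a small lemma, is that a non-maximal digit already forces a tiny remainder: if $a_n<M$, then by maximality the digit $a_n+1\le M$ violates the defining inequality, so $S_{n-1}+(a_n+1)/q^n\ge x$, whence $r_n=r_{n-1}-a_n/q^n\le 1/q^n$. Now suppose for contradiction that $\delta>0$. Since $r_n\ge\delta$ while $1/q^n\to 0$, the lemma forces $a_n=M$ for all large $n$; let $N\ge 1$ be the least index from which the sequence is constantly $M$. Summing the geometric tail then yields the identity $r_{N-1}=\delta+M/(q^{N-1}(q-1))$ (with $r_0=x$ in the case $N=1$).

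The contradiction comes from the two hypotheses, and this is where I expect the real content to sit. If $N\ge 2$, minimality gives $a_{N-1}<M$, so the lemma yields $r_{N-1}\le 1/q^{N-1}$; combined with the identity this forces $\delta q^{N-1}\le 1-M/(q-1)\le 0$, using $M\ge q-1$, contradicting $\delta>0$. If instead $N=1$, the identity reads $x=\delta+M/(q-1)>M/(q-1)$, contradicting the hypothesis $x\le M/(q-1)$. Either way $\delta=0$, so $(a_i)$ is an expansion of $x$.

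Finally, infiniteness drops out of the strict inequality $S_n<x$: if $(a_i)$ had only finitely many nonzero terms, say $a_n=0$ for all $n\ge N$, then $x=\sum a_i/q^i=S_{N-1}<x$, which is absurd; hence $(a_i)$ has infinitely many nonzero digits. The main obstacle is the convergence step $\delta=0$, and once the lemma ``$a_n<M\implies r_n\le 1/q^n$'' is in hand the two hypotheses $M\ge q-1$ and $x\le M/(q-1)$ slot in exactly to close the cases $N\ge 2$ and $N=1$ respectively; everything else is bookkeeping.
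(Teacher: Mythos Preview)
Your proof is correct and essentially matches the paper's argument. Both rest on the same key observation that $a_n<M$ forces $r_n\le 1/q^n$, and both split according to whether the sequence is eventually constant equal to $M$; the only organizational difference is that the paper argues directly (three cases: infinitely many $a_n<M$; all $a_i=M$; a last $a_n<M$) whereas you phrase the same trichotomy as a contradiction from $\delta>0$, and you spell out the infiniteness step that the paper leaves implicit.
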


\begin{proof}
In view of \eqref{21} it suffices to establish the converse inequality
\begin{equation}\label{22}
\frac{a_1}{q}+ \frac{a_2}{q^2}+\cdots \ge x.
\end{equation}

By definition we have
\begin{equation*}
x-\frac{1}{q^n}\le \frac{a_1}{q}+\cdots + \frac{a_n}{q^n}
\quad\text{whenever}\quad
a_n<M.
\end{equation*}
If there are infinitely many such indices then letting $n\to\infty$ hence
\eqref{22} follows.

If $a_i=M$ for all $i$, then 
\begin{equation*}
\frac{a_1}{q}+ \frac{a_2}{q^2}+\cdots = \frac{M}{q-1}\ge x
\end{equation*}
by our assumption on $x$.

Finally, if there exists a last digit $a_n<M$, then 
\begin{align*}
\frac{a_1}{q}+ \frac{a_2}{q^2}+\cdots 
&=\frac{a_1}{q}+\cdots + \frac{a_n}{q^n}+\frac{M}{(q-1)q^n}\\
&\ge x-\frac{1}{q^n}+\frac{M}{(q-1)q^n}\\
&\ge x
\end{align*}
by our assumption $M\ge q-1$.
\end{proof}

\begin{definition}
If $M\ge q-1$ and $0< x\le M/(q-1)$, then $(a_i)$ is called the {\em
quasi-greedy} expansion of $x$. (The terminology will be clarified in the next
section.)
\end{definition}

\begin{remark}
Since it is the lexicographically largest infinite sequence satisfying
\eqref{21}, the quasi-greedy expansion is the lexicographically largest
infinite expansion. 
\end{remark}

One can recognize the quasi-greedy expansions by their form:

\begin{theorem}\label{t22}\mbox{}

(a) The map $q\mapsto (\alpha_i)$, where $(\alpha_i)$ denotes the 
quasi-greedy expansion of $1$, is a strictly increasing one-to-one
correspondence
between the interval $(1,M+1]$ and the set of {\em infinite} sequences
satisfying
\begin{equation}\label{23}
\alpha_{n+1}\alpha_{n+2}\ldots\le \alpha_1\alpha_2\ldots
\quad\text{whenever}\quad
\alpha_n<M.
\end{equation}

(b) Fix $q\in (1,M+1]$ arbitrarily and denote by $(\alpha_i)$ the quasi-greedy 
expansion of $1$. The map $x\mapsto (a_i)$, where $(a_i)$
 denotes the quasi-greedy expansion of $x$,  is a strictly increasing
one-to-one correspondence between the interval $(0,M/(q-1)]$ and the set of
{\em infinite} sequences satisfying
\begin{equation}\label{24}
a_{n+1}a_{n+2}\ldots\le \alpha_1\alpha_2\ldots\quad\text{whenever}\quad a_n<M.
\end{equation}
\end{theorem}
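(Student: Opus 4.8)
My plan is to treat both parts through the single guiding principle established before the statement: the quasi-greedy expansion of $x$ is, by construction, the lexicographically largest \emph{infinite} sequence $(c_i)$ with $\sum c_iq^{-i}\le x$, and by Proposition \ref{p21} its value is exactly $x$. I would split each part into four tasks: strict monotonicity (whence injectivity), the ``forward'' inclusion that quasi-greedy expansions satisfy \eqref{23} resp.\ \eqref{24}, a key comparison lemma converting lexicographic domination into domination of values, and finally surjectivity.

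The monotonicity and forward inclusions I expect to be routine. For part (b), if $0<x<y$ then every infinite sequence of value $\le x$ has value $\le y$, so the quasi-greedy expansion of $x$ is $\le$ that of $y$; equality is impossible because the two have values $x\ne y$. For part (a), evaluating the quasi-greedy expansion of $1$ in base $q$ at a larger base $q'>q$ gives value $<1$, so it is lexicographically dominated by, and (again by comparing values) strictly below, the quasi-greedy expansion of $1$ in base $q'$. The forward inclusions use the recursive definition: at an index $n$ with $a_n<M$ maximality forces $\sum_{i\le n}a_iq^{-i}+q^{-n}\ge x$, so the tail satisfies $\sum_{k\ge1}a_{n+k}q^{-k}\le 1$; since this tail is an infinite sequence of value $\le 1$, lexicographic maximality of the quasi-greedy expansion of $1$ gives $a_{n+1}a_{n+2}\ldots\le\alpha_1\alpha_2\ldots$, which is \eqref{24} (and \eqref{23} when $x=1$).

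The crux, and the step I expect to be the real obstacle, is the \emph{converse} passage from lexicographic order to order of values, since in general $(\beta_i)\le(\alpha_i)$ does not imply $\sum\beta_iq^{-i}\le\sum\alpha_iq^{-i}$. I would isolate it as a lemma: if $(\alpha_i)$ is infinite, satisfies \eqref{23}, and has value $1$, and if $(\beta_i)$ is infinite with $\beta_1\beta_2\ldots\le\alpha_1\alpha_2\ldots$ and $\beta_{m+1}\beta_{m+2}\ldots\le\alpha_1\alpha_2\ldots$ whenever $\beta_m<M$, then $\sum\beta_iq^{-i}\le1$. The proof I have in mind is a self-referential bound. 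Let $S$ be the supremum of $\sum\beta_iq^{-i}$ over all such $(\beta_i)$; it is finite ($\le M/(q-1)$) and $\ge1$, since $\beta=\alpha$ is itself admissible (this is exactly where \eqref{23} is used, as the suffix hypothesis for $\alpha$). For $\beta\ne\alpha$ let $j$ be the first index with $\beta_j<\alpha_j$; then $\beta_j<M$, so the shifted sequence $\beta_{j+1}\beta_{j+2}\ldots$ is again admissible of value $\le S$, and since $\beta_i=\alpha_i$ for $i<j$ and $\beta_j\le\alpha_j-1$,
\[
\sum_{i\ge1}\beta_iq^{-i}\le\sum_{i\le j}\alpha_iq^{-i}-q^{-j}+q^{-j}S\le 1+q^{-j}(S-1)\le 1+q^{-1}(S-1).
\]
Taking the supremum over $\beta$ gives $S\le1+q^{-1}(S-1)$, hence $(S-1)(1-q^{-1})\le0$ and $S\le1$, proving the lemma.

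With this lemma surjectivity follows uniformly. Given an infinite sequence satisfying \eqref{23} (resp.\ \eqref{24}), I set $q$ by requiring its value to equal $1$ (resp.\ let $x$ be its value, which lies in $(0,M/(q-1)]$); in part (a) the map $q\mapsto\sum\alpha_iq^{-i}$ is continuous and strictly decreasing, tending to $+\infty$ as $q\to1^+$ and to $0$ as $q\to\infty$, so $q$ exists and is unique, and $q\le M+1$ because the value $1\le M/(q-1)$. Writing $(a_i')$ for the quasi-greedy expansion of the chosen number, maximality gives $(a_i)\le(a_i')$; if the inequality were strict, comparing the two (which have equal value) at the first index $n$ where they differ yields $a_n<M$ and $\sum_{k\ge1}a_{n+k}q^{-k}=(a_n'-a_n)+\sum_{k\ge1}a_{n+k}'q^{-k}\ge1$. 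The lemma, applied with the role of $(\alpha_i)$ played by the given sequence in part (a) and by the quasi-greedy expansion of $1$ in part (b), forces this tail value to be $\le1$, so $a_n'-a_n=1$ and $a_{n+k}'=0$ for all $k$, making $(a_i')$ finite and contradicting that quasi-greedy expansions are infinite. Hence $(a_i)=(a_i')$, which completes the argument.
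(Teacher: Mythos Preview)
Your proof is correct, and your monotonicity and forward-inclusion arguments are essentially the paper's. Where you diverge is in the key comparison step. The paper isolates it as Lemma~\ref{l23}: given that $(\alpha_i)$ is any expansion of $1$ and $(a_i)$ satisfies \eqref{24}, one shows $\sum_{i>n}a_iq^{-i}\le q^{-n}$ whenever $a_n<M$ by an explicit block decomposition---recursively choosing indices $k_0=n<k_1<\cdots$ at which the tail of $(a_i)$ first drops strictly below the corresponding prefix of $(\alpha_i)$, and then telescoping. This directly verifies the recursive definition of the quasi-greedy expansion, so surjectivity is immediate. Your route replaces this by a soft self-improvement argument: set $S$ equal to the supremum of the values of all admissible sequences, observe $S\ge1$, and then show every admissible value is at most $1+q^{-1}(S-1)$, whence $S\le1$. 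You then deduce surjectivity indirectly, comparing the given sequence with the actual quasi-greedy expansion and forcing the latter to be finite if they differ. The paper's decomposition is more constructive and yields the pointwise tail bound in one stroke; your contraction argument is shorter and avoids bookkeeping, at the cost of a slightly less direct surjectivity step. Both approaches prove the same inequality and are equally legitimate.
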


For the proof we need the following result:

\begin{lemma}\label{l23}
Let $(\alpha_i)$ be an arbitrary expansion of $1$. If a sequence $(a_i)$
satisfies the condition
\begin{equation*}
a_{n+1}a_{n+2}\ldots\le \alpha_1\alpha_2\ldots\quad\text{whenever}\quad a_n<M,
\end{equation*}
then we also have
\begin{equation*}
\frac{a_{n+1}}{q^{n+1}}+\frac{a_{n+2}}{q^{n+2}}+\cdots \le \frac{1}{q^n}
\end{equation*}
whenever $a_n<M$.

Consequently, if the sequence $(a_i)$ is also infinite, it is the quasi-greedy
expansion of 
\begin{equation*}
x:=\frac{a_1}{q} + \frac{a_2}{q^2}+\cdots .
\end{equation*}
\end{lemma}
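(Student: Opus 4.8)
The plan is to prove the quantitative inequality first and then deduce the quasi-greedy characterization from it. The tempting shortcut is to argue that the lexicographic inequality $a_{n+1}a_{n+2}\ldots\le\alpha_1\alpha_2\ldots$ already forces the corresponding inequality between the real numbers $\sum_i a_{n+i}/q^i$ and $\sum_i\alpha_i/q^i=1$. This is false in general when $q<M+1$, since a single lexicographic descent $a_m<\alpha_m$ can be overcompensated by the tail; so the hypothesis must be used in its full self-similar strength. First I would record that the shifted sequence $(a_{n+i})_i$ again satisfies the standing hypothesis: whenever one of its entries is $<M$, its own tail is lexicographically $\le(\alpha_i)$. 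Thus it suffices to rule out, for any \emph{admissible} sequence $(b_i)$ (meaning $b_m<M$ implies $(b_{m+i})_i\le(\alpha_i)_i$), the possibility that $\sum_i b_i/q^i>1$.

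Assume such a ``violation'' and set $\delta:=\sum_i b_i/q^i-1>0$. Since the value exceeds $1=\sum_i\alpha_i/q^i$, the two sequences differ; let $m\ge1$ be the first index with $b_m\ne\alpha_m$. Lexicographic dominance gives $b_m<\alpha_m$, hence $b_m\le\alpha_m-1<M$, so the hypothesis applies at $m$ and the tail $(b_{m+i})_i$ is itself admissible. Using $\sum_i\alpha_i/q^i=1$ to cancel the common prefix, a short computation yields the exact identity
\begin{equation*}
\Big(\sum_i\frac{b_{m+i}}{q^i}-1\Big)=q^m\,\delta+(\alpha_m-b_m)+\beta_m-1,
\end{equation*}
where $\beta_m:=\sum_i\alpha_{m+i}/q^i\ge0$ is the value of the corresponding tail of $(\alpha_i)$. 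Since $\alpha_m-b_m\ge1$ and $\beta_m\ge0$, the new excess is at least $q^m\delta\ge q\delta$. Iterating therefore produces admissible tails whose excesses grow at least geometrically, while every such excess is bounded above by $M/(q-1)-1$ (the largest possible value of an expansion, minus one). This contradiction proves $\sum_i b_i/q^i\le1$, which is exactly the asserted inequality.

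For the second assertion, suppose $(a_i)$ is infinite and put $x:=\sum_i a_i/q^i\in(0,M/(q-1)]$; by Proposition \ref{p21} the quasi-greedy expansion $(\tilde a_i)$ of $x$ exists and is infinite, and by the Remark it is the lexicographically largest infinite sequence with $\sum_i\tilde a_i/q^i\le x$. Since $(a_i)$ is itself an infinite sequence of this value, we get $(a_i)\le(\tilde a_i)$, and it remains to exclude a strict inequality. If the first discrepancy occurs at an index $n$ with $a_n<\tilde a_n\le M$, then $a_n<M$, so the inequality already proved gives $\sum_{k>n}a_k/q^k\le q^{-n}$; equating the two expressions for $x$ then forces $\tilde a_n=a_n+1$ together with $\tilde a_k=0$ for all $k>n$. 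That would make $(\tilde a_i)$ finite, contradicting its infiniteness. Hence $(a_i)=(\tilde a_i)$.

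I expect the crux to be the first inequality, and in particular the realization that lexicographic comparison does not transfer to the real values; the self-similar amplification estimate above, in which the excess is multiplied by at least $q$ at every descent yet stays bounded, is the device that circumvents this. Once the tail bound is in hand, the deduction of the quasi-greedy property is routine.
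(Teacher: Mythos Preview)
Your argument is correct in substance and takes a genuinely different route from the paper's proof, but there is one framing slip to fix. You claim it suffices to show $\sum_i b_i/q^i\le 1$ for every \emph{admissible} sequence, where ``admissible'' only means ``$b_m<M\Rightarrow (b_{m+i})\le(\alpha_i)$''. As stated this is false: the sequence $M^{\infty}$ is vacuously admissible and has value $M/(q-1)$, which may exceed~$1$. What you actually use (and what holds for the initial tail $(a_{n+i})$ thanks to $a_n<M$) is admissibility \emph{together with} $(b_i)\le(\alpha_i)$. Once you add that to the running hypothesis, your iteration is airtight: at each step $b_m<\alpha_m\le M$ gives both the tail inequality and admissibility of the new tail, and the excess grows by a factor $\ge q$ while staying below $M/(q-1)-1$.

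As for the comparison: the paper proceeds constructively. Starting from $k_0=n$ it locates the successive indices $k_0<k_1<\cdots$ where the tail first drops below the corresponding block of $(\alpha_i)$; on each block the partial sum is at most $q^{-k_{j-1}}-q^{-k_j}$, so the whole tail telescopes to at most $q^{-n}$. This gives the inequality directly, with a short case distinction according to whether the sequence of $k_j$ terminates. Your approach is the contrapositive of the same idea: an excess $\delta>0$ gets amplified by at least $q$ at every descent, which cannot persist under the uniform bound. Your version avoids the case split and is arguably more memorable; the paper's version is constructive and yields the bound without contradiction. For the ``consequently'' clause, the paper simply rewrites the tail bound as $\sum_{k\le n}a_k/q^k\ge x-q^{-n}$ and observes this is exactly the recursive definition of the quasi-greedy digits, whereas you compare with the quasi-greedy expansion $(\tilde a_i)$ and force it to be finite. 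Both are fine; the paper's route is a line shorter.
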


\begin{proof}
Starting with $k_0:=n$ let us define by recurrence a sequence of indices
$k_0<k_1<\cdots$ satisfying for $j=1,2,\ldots$ the conditions
\begin{equation*}
a_{k_{j-1}+i}=\alpha_i\quad\text{for}\quad i=1,\ldots, k_j-k_{j-1}-1,
\quad\text{and}\quad a_{k_j}<\alpha_{k_j-k_{j-1}}. 
\end{equation*}
If we obtain an infinite sequence, then we have
\begin{align*}
\sum _{i=n+1}^\infty \frac{a_i}{q^i}
&= \sum _{j=1}^\infty\sum_{i=1}^{k_j-k_{j-1}}
\frac{a_{k_{j-1}+i}}{q^{k_{j-1}+i}}\\
&\le \sum _{j=1}^\infty \Bigl(\Bigl(\sum_{i=1}^{k_j-k_{j-1}}
\frac{\alpha_i}{q^{k_{j-1}+i}}\Bigr)-\frac{1}{q^{k_j}}\Bigr)\\
&\le\sum _{j=1}^\infty \Bigl(\frac{1}{q^{k_{j-1}}}-\frac{1}{q^{k_j}}\Bigr)\\
&=\frac{1}{q^n}.
\end{align*}

Otherwise we have $(a_{k_N+i})=(\alpha_i)$ after a finite number of steps (we do
not exclude the possibility that $N=0$), and we may conclude as follows:
\begin{align*}
\sum _{i=n+1}^\infty \frac{a_i}{q^i}
&=\Bigl(\sum _{j=1}^N\sum_{i=1}^{k_j-k_{j-1}}\frac{a_i}{q^{k_{j-1}+i}}\Bigr)
+\sum _{i=1}^\infty \frac{a_{k_N+i}}{q^{k_N+i}}\\
&\le \sum _{j=1}^N \Bigl(\Bigl(
\sum
_{i=1}^{k_j-k_{j-1}}\frac{\alpha_i}{q^{k_{j-1}+i}}\Bigr)-\frac{1}{q^{k_j}}\Bigr)
+\sum_{i=1}^\infty\frac{\alpha_i}{q^{k_N+i}}\\
&\le\sum _{j=1}^N \Bigl(\frac{1}{q^{k_{j-1}}}-\frac{1}{q^{k_j}}\Bigr)
+\frac{1}{q^{k_N}}\\
&=\frac{1}{q^n}.\qedhere
\end{align*}

If the sequence $(a_i)$ is infinite, then setting
\begin{equation*}
x:=\frac{a_1}{q} + \frac{a_2}{q^2}+\cdots 
\end{equation*}
our result can be written in the form
\begin{equation*}
\frac{a_1}{q}+\cdots + \frac{a_n}{q^n}\ge x-\frac{1}{q^n}
\end{equation*}
whenever $a_n<M$, and this proves that $(a_i)$ is the quasi-greedy
expansion of $x$.
\end{proof}

\begin{proof}[\rm\bf Proof of Theorem \ref{t22}] The strict increasingness of
both maps follows from the definition of quasi-greedy expansions. 

In order to prove that every quasi-greedy expansion satisfies \eqref{24} it suffices to observe that if $a_n<M$ for some
$n$, then we infer from the inequalities
\begin{equation*}
\frac{a_1}{q}+\cdots+\frac{a_{n-1}}{q^{n-1}}+\frac{a_n+1}{q^n}\ge x
=\frac{a_1}{q}+\frac{a_2}{q^2}+\cdots
\end{equation*}
that 
\begin{equation*}
\frac{a_{n+1}}{q}+\frac{a_{n+2}}{q^2}+\cdots \le 1.
\end{equation*}
This yields \eqref{24} because $(\alpha_i)$ is by definition the
lexicographically largest infinite sequence satisfying such an inequality (see
\eqref{21}. The condition \eqref{23} hence follows by taking $x=1$.

Finally, the {\em onto} property of both maps follows from the preceding lemma.
\end{proof}

\begin{remarks}\mbox{}

\begin{itemize}
\item The results of this section extend to the case $M=\infty$, i.e., when all
nonnegative digits are permitted in the expansions. In this case the conditions
$M\ge q-1$, $x\le M/(q-1)$, $a_n<M$ and $\alpha_n<M$ are automatically
satisfied, and hence can be omitted.

\item Observe that if $0<x\le (M+1)/q$, then the quasi-greedy expansion of $x$ remains
the same by changing $M$ to infinity. Consequently, in Theorem \ref{t22} the
condition \eqref{23} is satisfied for {\em all} $n$, and \eqref{24} is
also satisfied for {\em all} $n$ if $0<x\le (M+1)/q$.
\end{itemize}

\end{remarks}

For the sake of convenience we end this section by giving explicitly the results corresponding to the case $M=\infty$. We fix a real number $q>1$.

\begin{definition}
The \emph{quasi-greedy expansion} of a positive real number $x$ is by definition the lexicographically largest infinite sequence $(a_i)$ of nonnegative integers satisfying
\begin{equation*}
\frac{a_1}{q}+ \frac{a_2}{q^2}+\cdots \le x.
\end{equation*}
\end{definition}

Equivalently, the sequence $(a_i)$ is defined recursively as follows: if $a_k$ has already
been defined for all $k<n$ (no assumption if $n=1$), then $a_n$ is the
largest integer satisfying the inequality
\begin{equation*}
\frac{a_1}{q}+\cdots + \frac{a_n}{q^n}< x.
\end{equation*}

We have the following

\begin{proposition}\label{p24}
If $x>0$, then $(a_i)$ is an infinite expansion of $x$, i.e., it has infinitely many nonzero elements and
\begin{equation*}
\frac{a_1}{q}+ \frac{a_2}{q^2}+\cdots = x.
\end{equation*}

Furthermore, we have $a_n< q$ for all $n\ge 2$.
\end{proposition}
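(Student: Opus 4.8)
The plan is to extract both assertions directly from the recursive description of $(a_i)$, which is the only input I will use. Write $S_n:=a_1/q+\cdots+a_n/q^n$ (and $S_0:=0$). The recursion yields, for every $n\ge1$, the pair of facts $S_n<x$ and $S_n+q^{-n}\ge x$: the first is the defining strict inequality, while the second holds because $a_n$ is the \emph{largest} integer keeping the partial sum below $x$, so raising it by one would reach or exceed $x$. (The recursion is well posed since $x>0$ keeps the gap $x-S_{n-1}$ strictly positive.) From these, since $(S_n)$ is nondecreasing and bounded above by $x$, it converges to some $L\le x$, whereas $S_n\ge x-q^{-n}$ forces $L\ge x$; hence $L=x$ and $(a_i)$ is an expansion of $x$. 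Infiniteness is then immediate: were $a_i=0$ for all $i>N$, we would get $x=L=S_N<x$, a contradiction.

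For the digit bound I would first isolate the tail estimate already implicit above: for every $n\ge1$,
\[
\frac{a_{n+1}}{q^{n+1}}+\frac{a_{n+2}}{q^{n+2}}+\cdots=x-S_n\le\frac{1}{q^n}.
\]
Applying this at the index $n-1$ for a fixed $n\ge2$ gives $a_n/q^n\le\sum_{i\ge n}a_i/q^i\le q^{-(n-1)}$, hence $a_n\le q$. If $q$ is not an integer this already forces $a_n<q$, because $a_n$ is an integer. Note also that this is exactly why the bound is claimed only for $n\ge2$: the same reasoning at $n=1$ would require the vacuous estimate at index $0$, i.e. $x\le1$, which need not hold.

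The step I expect to be delicate, and where the infiniteness just proved is essential, is sharpening $a_n\le q$ to the strict $a_n<q$ in the remaining case of integer $q$. I would argue by contradiction: if $a_n\ge q$ for some $n\ge2$, then $\sum_{i\ge n}a_i/q^i\ge a_n/q^n\ge q^{-(n-1)}$, so together with the tail estimate at index $n-1$ every inequality must be an equality. This forces $a_n=q$ and $a_i=0$ for all $i>n$, making $(a_i)$ finite and contradicting the infiniteness established above. Therefore $a_n<q$ for every $n\ge2$, completing the proof.
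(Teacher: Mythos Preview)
Your proof is correct and uses the same basic squeeze $S_n<x\le S_n+q^{-n}$ as the paper. The only notable difference is your treatment of the bound $a_n<q$: you pass through the tail estimate $a_n/q^n\le\sum_{i\ge n}a_i/q^i\le q^{-(n-1)}$, obtain only $a_n\le q$, and then need a separate contradiction argument (via infiniteness) to handle integer $q$. The paper avoids this detour by combining the \emph{strict} inequality at level $n$ with the upper bound at level $n-1$:
\[
\frac{a_n}{q^n}=S_n-S_{n-1}<x-S_{n-1}\le\frac{1}{q^{n-1}},
\]
which gives $a_n<q$ in one line for all $q>1$, integer or not. Your route is valid, just longer than necessary.
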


\begin{proof}
We have
\begin{equation*}
\frac{a_1}{q}+\cdots + \frac{a_n}{q^n}< x\le \frac{a_1}{q}+\cdots + \frac{a_n}{q^n}+\frac{1}{q^n}
\end{equation*}
for all $n\ge 1$ by definition. Hence
\begin{equation*}
0<x-\left( \frac{a_1}{q}+\cdots + \frac{a_n}{q^n}\right) \le \frac{1}{q^n}
\end{equation*}
for all $n$ and the right-hand side converges to zero.

It follows from the inequalities 
\begin{equation*}
\frac{a_1}{q}+\cdots + \frac{a_n}{q^n}+\frac{a_{n+1}}{q^{n+1}}< x\le \frac{a_1}{q}+\cdots + \frac{a_n}{q^n}+\frac{1}{q^n}
\end{equation*}
that
\begin{equation*}
\frac{a_{n+1}}{q^{n+1}}< \frac{1}{q^n}
\end{equation*}
and therefore $a_{n+1}<q$ for $n=1,2,\ldots .$
\end{proof}

\begin{theorem}\label{t25}\mbox{}

(a) The map $q\mapsto (\alpha_i)$, where $(\alpha_i)$ denotes the 
quasi-greedy expansion of $1$, is a strictly increasing one-to-one
correspondence
between the interval $(1,\infty)$ and the set of {\em infinite} sequences
satisfying
\begin{equation*}
\alpha_{n+1}\alpha_{n+2}\ldots\le \alpha_1\alpha_2\ldots
\quad\text{for all}\quad
n.
\end{equation*}

(b) Fix $q>1$ arbitrarily and denote by $(\alpha_i)$ the quasi-greedy 
expansion of $1$. The map $x\mapsto (a_i)$, where $(a_i)$
 denotes the quasi-greedy expansion of $x$,  is a strictly increasing
one-to-one correspondence between the interval $(0,\infty)$ and the set of
{\em infinite} sequences satisfying
\begin{equation*}
a_{n+1}a_{n+2}\ldots\le \alpha_1\alpha_2\ldots\quad\text{for all}\quad n.
\end{equation*}
\end{theorem}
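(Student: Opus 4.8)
The plan is to run the proof of Theorem \ref{t22} essentially verbatim in the case $M=\infty$, replacing Proposition \ref{p21} by Proposition \ref{p24} and dropping the side conditions $a_n<M$ and $\alpha_n<M$, which now hold automatically (so that ``whenever $a_n<M$'' becomes ``for all $n$''). The strict increasingness of both maps is obtained exactly as before from the recursive definition of the quasi-greedy expansion. For the \emph{into} direction I would argue as in Theorem \ref{t22}: if $(a_i)$ is the quasi-greedy expansion of $x$, then the maximality built into Proposition \ref{p24} gives
\begin{equation*}
\frac{a_1}{q}+\cdots+\frac{a_{n-1}}{q^{n-1}}+\frac{a_n+1}{q^n}\ge x=\frac{a_1}{q}+\frac{a_2}{q^2}+\cdots,
\end{equation*}
whence $\frac{a_{n+1}}{q}+\frac{a_{n+2}}{q^2}+\cdots\le 1$ for every $n$; since $(\alpha_i)$ is by definition the lexicographically largest infinite sequence with value $\le 1$, this yields $a_{n+1}a_{n+2}\ldots\le\alpha_1\alpha_2\ldots$, and the special choice $x=1$ gives the corresponding condition in part (a).

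For the \emph{onto} direction of part (b) I would start from an infinite sequence $(a_i)$ satisfying $a_{n+1}a_{n+2}\ldots\le\alpha_1\alpha_2\ldots$ for all $n$. The admissibility condition forces $a_i\le\alpha_1$ for every $i\ge 2$, so the series $x:=\sum_{i\ge 1}a_iq^{-i}$ converges, and $x>0$ because $(a_i)$ is infinite; thus $x\in(0,\infty)$. Lemma \ref{l23}, with its side condition now read for all $n$, then says precisely that $(a_i)$ is the quasi-greedy expansion of $x$, so the map is onto; it is injective by strict increasingness.

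Part (a) needs one genuinely new ingredient, since Lemma \ref{l23} presupposes that $(\alpha_i)$ is already an expansion of $1$, whereas here the base $q$ is not given in advance. Starting from an infinite sequence $(\alpha_i)$ with $\alpha_{n+1}\alpha_{n+2}\ldots\le\alpha_1\alpha_2\ldots$ for all $n$, I would first note that $\alpha_1\ge 1$ (otherwise the first nonzero digit, compared against the leading $0$, would violate the condition) and that $\alpha_i\le\alpha_1$ for all $i$, and then study
\begin{equation*}
f(q):=\frac{\alpha_1}{q}+\frac{\alpha_2}{q^2}+\cdots,\qquad q\in(1,\infty).
\end{equation*}
The bound $\alpha_i\le\alpha_1$ gives $f(q)\le\alpha_1/(q-1)<\infty$ together with local uniform convergence, so $f$ is continuous and strictly decreasing; moreover $f(q)\to 0$ as $q\to\infty$ and $f(q)\to\infty$ as $q\to 1^+$ (there being infinitely many nonzero digits). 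By the intermediate value theorem there is a unique $q\in(1,\infty)$ with $f(q)=1$, i.e.\ for which $(\alpha_i)$ is an expansion of $1$; Lemma \ref{l23} then identifies $(\alpha_i)$ as the quasi-greedy expansion of $1$ in this base. Uniqueness of $q$ and injectivity follow from strict monotonicity.

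The main obstacle is exactly this existence-of-$q$ step in part (a): everything else is a transcription of Theorem \ref{t22}, but here one must manufacture the base from the sequence and control the convergence of $f$, which is where the admissibility bound $\alpha_i\le\alpha_1$ — automatic for finite $M$ but now essential when $M=\infty$ — does the real work. I would also double-check that this same bound secures the convergence of $x$ in part (b), so that both maps genuinely land in $(0,\infty)$ and correspond to bases in $(1,\infty)$ respectively.
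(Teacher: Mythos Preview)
Your proposal is correct and follows the paper's approach almost exactly: the paper's entire proof of Theorem~\ref{t25} is the single sentence ``We may repeat the proof of Lemma~\ref{l23} (by simply omitting the words `whenever $a_n<M$' in its statement) and Theorem~\ref{t22}.'' Your transcription of the \emph{into} direction and of Lemma~\ref{l23} matches this.

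The one place you diverge is the step you flag as ``genuinely new'': producing the base $q$ from a given admissible sequence $(\alpha_i)$ via the intermediate value theorem. You are right that Lemma~\ref{l23} presupposes a $q$ for which $(\alpha_i)$ is already an expansion of $1$, so this step is indeed required; but note that the paper leaves it tacit already in the proof of Theorem~\ref{t22}, where the \emph{onto} assertion for part~(a) is dispatched with ``follows from the preceding lemma'' without further comment. So this is not a new phenomenon of the $M=\infty$ case but a detail the paper skips throughout; your IVT argument (with the crucial bound $\alpha_i\le\alpha_1$ guaranteeing convergence and continuity of $f$, and the infinitude of nonzero digits giving $f(q)\to\infty$ as $q\to 1^+$) is the natural way to supply it and is correct. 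Likewise, your remark that $a_i\le\alpha_1$ for $i\ge 2$ secures convergence of $x$ in part~(b) is exactly the point that distinguishes the unbounded-digit case from the finite-$M$ one and is handled correctly.
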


\begin{remark}
It follows from Propositions \ref{p21} and \ref{p24} that the quasi-greedy expansion of $x$ is the same for all $M$ satisfying $0<x\le (M+1)/q$ (including $M=\infty$). In particular, the quasi-greedy expansion $(\alpha_i)$ of $x=1$ is the same for all $q-1\le M\le\infty$.
Consequently, in Theorem \ref{t22} the
condition \eqref{23} is satisfied for {\em all} $n$, and \eqref{24} is
also satisfied for {\em all} $n$ if $0<x\le (M+1)/q$.
\end{remark}

\begin{proof}
We may repeat the proof of Lemma \ref{l23} (by simply omitting the words ``whenever $a_n<M$'' in its statement) and Theorem \ref{t22}.
\end{proof}

\section{Greedy expansions}\label{s3}

Fix a positive integer $M$ and a real number $q>1$. We recall that $x$ must belong to the interval $[0,M/(q-1)]$ in order to have an expansion. 

In order to prove a converse statement, let us introduce for each $x\ge 0$ the
lexicographically largest sequence $(b_i)$ satisfying
\begin{equation}\label{31}
\frac{b_1}{q}+ \frac{b_2}{q^2}+\cdots \le x.
\end{equation}
This is equivalent to the following recursive definition: if $b_k$ has already
been defined for all $k<n$ (no assumption if $n=1$), then let $b_n$ be the
largest integer satisfying the inequalities
\begin{equation*}
b_n\le M
\quad\text{and}\quad
\frac{b_1}{q}+\cdots + \frac{b_n}{q^n}\le x.
\end{equation*}
Since $x\ge 0$, the definition is correct. First we prove the following variant of Proposition \ref{p21}:

\begin{proposition}\label{p31}
If $M\ge q-1$ and $0\le x\le M/(q-1)$, then $(b_i)$ is an expansion of $x$.
\end{proposition}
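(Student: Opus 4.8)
The plan is to mirror the structure of the proof of Proposition~\ref{p21}, establishing the reverse inequality
\begin{equation*}
\frac{b_1}{q}+\frac{b_2}{q^2}+\cdots\ge x,
\end{equation*}
since the definition of $(b_i)$ already gives \eqref{31}. The recursive definition supplies the key one-step estimate: whenever $b_n<M$, maximality forces
\begin{equation*}
\frac{b_1}{q}+\cdots+\frac{b_n}{q^n}+\frac{1}{q^n}>x,
\end{equation*}
so that $\frac{b_1}{q}+\cdots+\frac{b_n}{q^n}>x-\frac{1}{q^n}$. Note that compared to the quasi-greedy case the inequality defining $b_n$ is non-strict ($\le x$ rather than $<x$), which is exactly what allows finite expansions to terminate cleanly; this is the only substantive difference to keep track of.

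First I would dispose of the case where $b_n<M$ holds for infinitely many indices $n$: along that subsequence the one-step estimate gives $\sum_{i=1}^n b_i/q^i>x-1/q^n$, and letting $n\to\infty$ yields the reverse inequality \eqref{22}-style bound, hence $(b_i)$ is an expansion.

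Next I would handle the complementary case in which $b_n=M$ for all sufficiently large $n$, say for all $n>N$ (allowing $N=0$). Here I split into two subcases exactly as in Proposition~\ref{p21}. If $b_i=M$ for \emph{all} $i$, then $\sum b_i/q^i=M/(q-1)\ge x$ by the hypothesis $x\le M/(q-1)$. Otherwise there is a last index $n$ with $b_n<M$; then
\begin{align*}
\frac{b_1}{q}+\frac{b_2}{q^2}+\cdots
&=\frac{b_1}{q}+\cdots+\frac{b_n}{q^n}+\frac{M}{(q-1)q^n}\\
&>x-\frac{1}{q^n}+\frac{M}{(q-1)q^n}\\
&\ge x,
\end{align*}
where the last step uses $M\ge q-1$. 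Together with \eqref{31} this gives equality, so $(b_i)$ is an expansion of $x$.

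The main point to watch is the subcase where the digits eventually stabilize at $M$ but are not identically $M$: there the expansion is genuinely finite-then-periodic, and one must invoke both hypotheses, $x\le M/(q-1)$ to bound the all-$M$ case and $M\ge q-1$ to absorb the tail correction $-1/q^n+M/((q-1)q^n)$. I expect no real obstacle beyond bookkeeping, since the strict inequality $>x-1/q^n$ (coming from the non-strict definition of $b_n$) is if anything slightly stronger than what was available in the quasi-greedy setting, so the same chain of estimates goes through verbatim.
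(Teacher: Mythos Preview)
Your proof is correct. The three-case split (infinitely many $b_n<M$; all $b_i=M$; a last $b_n<M$) goes through exactly as you describe, and the non-strict definition of $b_n$ indeed yields the slightly sharper one-step bound $\sum_{i\le n}b_i/q^i>x-1/q^n$, which only helps.

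The paper, however, does not repeat the case analysis of Proposition~\ref{p21}. Instead it disposes of $x=0$ trivially and, for $x>0$, compares the two recursive definitions to get
\[
\sum_{i=1}^{n}\frac{a_i}{q^i}\;\le\;\sum_{i=1}^{n}\frac{b_i}{q^i}\;\le\;x
\quad\text{for every }n,
\]
then invokes Proposition~\ref{p21} (which gives $\sum a_i/q^i=x$) to squeeze. So the paper treats Proposition~\ref{p21} as a lemma rather than a template. Its argument is shorter and makes the logical dependence on the quasi-greedy result explicit; your argument is self-contained and would stand even without Proposition~\ref{p21} already in hand. Both are perfectly valid.
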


\begin{proof}
The case $x=0$ is obvious: then $(b_i)$ is the null sequence. 

If $x>0$, then comparing with the
recursive  definition of the quasi-greedy expansions we obtain that
\begin{equation*}
\frac{a_1}{q}+ \frac{a_2}{q^2}+\cdots \le 
\frac{b_1}{q}+ \frac{b_2}{q^2}+\cdots \le x
\end{equation*}
for all $n$. If $M\ge q-1$ and $x\le M/(q-1)$, then the left-hand side tends to
$x$ by Proposition \ref{p21} and therefore $(b_i)$ is also an expansion of
$x$. This is obviously satisfied for $x=0$, too, when $(b_i)$ is the null
sequence. 
\end{proof}

\begin{definition}
If $M\ge q-1$ and $0\le x\le M/(q-1)$, then $(b_i)$ is called the {\em
greedy} expansion of $x$. 
\end{definition}

\begin{remark}
As the lexicographically largest sequence satisfying \eqref{31}, the greedy
expansion of $x$ is the lexicographically largest expansion of $x$. 
\end{remark}

Next we prove a variant of Theorem \ref{t22}. It is convenient to
define the greedy expansion of $1$ for $q=1$ by setting
$(\beta_i)=10^{\infty}$, i.e., $\beta_1=1$ and $\beta_i=0$ for all $i>1$.

\begin{theorem}\label{t32}\mbox{}

(a) The map $q\mapsto (\beta_i)$, where $(\beta_i)$ denotes the 
greedy expansion of $1$, is a strictly increasing one-to-one
correspondence
between the interval $[1,M+1]$ and the set of all sequences
satisfying
\begin{equation}\label{32}
\beta_{n+1}\beta_{n+2}\ldots< \beta_1\beta_2\ldots
\quad\text{whenever}\quad
\beta_n<M.
\end{equation}

(b) Fix $1<q\le M+1$ arbitrarily and denote by $(\alpha_i)$ the
{\em quasi-greedy} expansion of $1$. The map $x\mapsto (b_i)$, where $(b_i)$
 denotes the greedy expansion of $x$,  is a strictly increasing
one-to-one correspondence between the interval $[0,M/(q-1)]$ and the set of
all sequences satisfying
\begin{equation}\label{33}
b_{n+1}b_{n+2}\ldots< \alpha_1\alpha_2\ldots\quad\text{whenever}\quad b_n<M.
\end{equation}
\end{theorem}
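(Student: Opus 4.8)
The plan is to follow the three-step pattern of the proof of Theorem~\ref{t22}: establish strict increasingness, show that every greedy expansion satisfies the stated condition, and finally prove surjectivity by means of a converse lemma playing the role of Lemma~\ref{l23}. The one structural novelty is that all lexicographic inequalities are now \emph{strict}. This matches the fact that the greedy digit $b_n$ is the largest integer with $\frac{b_1}{q}+\cdots+\frac{b_n}{q^n}\le x$ (a \emph{non-strict} inequality at value $x$), so that $b_n<M$ forces the \emph{strict} tail estimate $\sum_{i>n}b_iq^{-i}<q^{-n}$, i.e. $\sum_{i\ge1}b_{n+i}q^{-i}<1$.

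Strict increasingness of the map in (b) is immediate from the characterisation of the greedy expansion as the lexicographically largest sequence satisfying \eqref{31}: if $x<y$, then the greedy expansion of $x$ has value $\le y$, hence is $\le$ the greedy expansion of $y$, and the two differ because their values do. For (a) the same idea applies in the variable $q$, since evaluating a fixed nonzero digit sequence in a larger base strictly decreases its value; thus $\beta(q)$ has value $<1$ in base $q'>q$ and is therefore $<\beta(q')$, the endpoint $q=1$ with $(\beta_i)=10^\infty$ fitting this pattern. For the necessity of \eqref{33}, fix $n$ with $b_n<M$, so $\sum_{i\ge1}b_{n+i}q^{-i}<1$. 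If $b_{n+1}b_{n+2}\ldots\ge\alpha_1\alpha_2\ldots$, then either equality holds and the tail value is $\sum\alpha_iq^{-i}=1$, or the first discrepancy $b_{n+k}\ge\alpha_k+1$ (which forces $\alpha_k<M$) combined with the recursive definition of the quasi-greedy $(\alpha_i)$, namely $\sum_{i<k}\alpha_iq^{-i}+(\alpha_k+1)q^{-k}\ge1$, again forces tail value $\ge1$; both contradict $<1$. Hence \eqref{33} holds, and since the quasi-greedy expansion is $\le$ the greedy one, taking $x=1$ gives $\beta_{n+1}\beta_{n+2}\ldots<\alpha_1\alpha_2\ldots\le\beta_1\beta_2\ldots$, which is \eqref{32} (the case $q=1$ being checked directly).

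The crux is surjectivity, for which I would prove a strict converse of Lemma~\ref{l23}: if $(\alpha_i)$ is an expansion of $1$ and $(b_i)$ satisfies $b_{n+1}b_{n+2}\ldots<\alpha_1\alpha_2\ldots$ whenever $b_n<M$, then $\sum_{i>n}b_iq^{-i}<q^{-n}$ whenever $b_n<M$, so that $(b_i)$ is the greedy expansion of its value. Lemma~\ref{l23} already gives ``$\le q^{-n}$''; the work is to exclude equality. Assuming $\sum_{i\ge1}b_{n+i}q^{-i}=1$ and letting $k$ be the first index with $b_{n+k}\le\alpha_k-1$, a one-line rearrangement yields $\sum_{j\ge1}b_{n+k+j}q^{-j}\ge1+q^k\sum_{i>k}\alpha_iq^{-i}$, while Lemma~\ref{l23} applied at the index $n+k$ (legitimate since $b_{n+k}<M$) gives $\sum_{j\ge1}b_{n+k+j}q^{-j}\le1$; hence $\sum_{i>k}\alpha_iq^{-i}=0$ and $\sum_{j\ge1}b_{n+k+j}q^{-j}=1$ simultaneously. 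For part~(b) one takes $(\alpha_i)$ to be the quasi-greedy expansion of $1$, which is \emph{infinite}, so $\sum_{i>k}\alpha_iq^{-i}=0$ is already absurd; any $(b_i)$ satisfying \eqref{33} is then greedy, its value lies in $[0,M/(q-1)]$, and injectivity is automatic, which gives the bijection in~(b).

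For part~(a) I would apply this converse \emph{self-referentially}, with $\alpha:=\gamma$. Given a (possibly finite) sequence $(\gamma_i)$ satisfying \eqref{32}, one first checks $\gamma_1\ge1$ and that $g(q):=\sum\gamma_iq^{-i}$ is continuous and strictly decreasing on $(1,M+1]$, with $g(q)\to\sum\gamma_i\ge1$ as $q\downarrow1$ and $g(M+1)\le1$; the intermediate value theorem then produces a unique $q^*\in(1,M+1]$ with $g(q^*)=1$ (or $q^*=1$ in the degenerate case $(\gamma_i)=10^\infty$). Now $(\gamma_i)$ is an expansion of $1$ in base $q^*$, so Lemma~\ref{l23} applies with $\alpha=a=\gamma$, and the strict converse shows $(\gamma_i)=\beta(q^*)$; here the equality case is excluded even for finite $\gamma$ because the forced conditions $\sum_{i>k}\gamma_iq^{-i}=0$ and $\sum_{j\ge1}\gamma_{n+k+j}q^{-j}=1$ contradict each other, the indices $n+k+j$ exceeding $k$ once $\gamma$ terminates at $k$. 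The main obstacle is precisely this strict-versus-nonstrict upgrade, sharpened by the need to treat finite $\gamma$ in~(a), together with the base construction $q^*$ via the intermediate value theorem; the lexicographic bookkeeping in the necessity step is routine.
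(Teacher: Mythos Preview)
Your proposal is correct and follows the paper's three-step scaffold (monotonicity, necessity, surjectivity), but two of the sub-arguments are organised differently from the paper's, and in both cases your version is the cleaner one.

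For necessity of \eqref{33}, the paper first obtains the strict tail estimate $\sum_{i\ge1}b_{n+i}q^{-i}<1$ and then splits according to whether $(b_i)$ is infinite or finite: in the infinite case it compares directly with $(\alpha_i)$, while in the finite case it passes to the periodic expansion $(b_1\ldots b_{m-1}b_m^-)^{\infty}$, derives \eqref{34}, and reaches a contradiction via the inequality $(\alpha_i)\le(\alpha_1\ldots\alpha_m)^{\infty}$. Your argument avoids this detour entirely: from $b_{n+1}b_{n+2}\ldots\ge(\alpha_i)$ you read off either equality (tail value $=1$) or a first discrepancy $b_{n+k}\ge\alpha_k+1$, and the recursive definition of the quasi-greedy digits immediately gives tail value $\ge1$, contradicting $<1$. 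This handles finite and infinite $(b_i)$ uniformly.

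For surjectivity, the paper isolates the self-referential case as Lemma~\ref{l33} (an expansion of $1$ satisfying \eqref{32} is greedy), where the equality case is traced through the telescoping chain of Lemma~\ref{l23} and shown to force the degenerate $q=1$; the paper then appeals to Lemmas~\ref{l23} and \ref{l33} together for part~(b), somewhat elliptically. You instead prove one strict converse of Lemma~\ref{l23} that covers both parts: equality at index $n$ plus the first discrepancy at $k$ gives simultaneously $\sum_{i>k}\alpha_iq^{-i}=0$ and $\sum_{j\ge1}b_{n+k+j}q^{-j}=1$; for (b) the first is impossible because $(\alpha_i)$ is infinite, and for (a) (with $\alpha=\gamma$) the two together are self-contradictory since every index $n+k+j$ exceeds $k$. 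You also make explicit the intermediate-value construction of $q^*$ for part~(a), which the paper leaves implicit when it invokes Lemma~\ref{l33}. Both approaches are sound; yours is more self-contained and avoids the finite/infinite bifurcation.
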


\begin{remark}
Part (b) of this theorem is a slight generalization of earlier theorems obtained by Parry \cite{Par1960} and Dar\'oczy and K\'atai \cite{DarKat1995}.
\end{remark}

We need a variant of Lemma \ref{l23}:

\begin{lemma}\label{l33}
If an expansion $(\beta_i)$ of $1$ for some $q\ge 1$ satisfies the condition
\begin{equation*}
\beta_{n+1}\beta_{n+2}\ldots< \beta_1\beta_2\ldots\quad\text{whenever}\quad
\beta_n<M,
\end{equation*}
then it is the greedy expansion of $1$.
\end{lemma}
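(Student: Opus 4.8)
The plan is to reduce greediness to an analytic condition on the tails of $(\beta_i)$ and then to quote Lemma \ref{l23}. By the recursive definition of the greedy expansion, an expansion $(\beta_i)$ of $1$ coincides digit-by-digit with the greedy expansion as soon as no digit $\beta_n$ can be enlarged without violating $\frac{\beta_1}{q}+\cdots+\frac{\beta_n}{q^n}\le 1$; since $\sum_i\beta_i/q^i=1$, a one-line computation turns this into the \emph{strict} tail estimate
\begin{equation*}
\frac{\beta_{n+1}}{q}+\frac{\beta_{n+2}}{q^2}+\cdots<1\qquad\text{whenever}\qquad \beta_n<M .
\end{equation*}
So it suffices to prove this strict estimate, for then each $\beta_n$ is the largest admissible digit and $(\beta_i)$ is the greedy expansion. (If $q=1$ the only expansion of $1$ satisfying the hypothesis is $10^\infty$, which is greedy by our convention, so I may assume $q>1$.)

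First I would apply Lemma \ref{l23} with both of its sequences taken to be $(\beta_i)$ itself: the hypothesis gives $\beta_{n+1}\beta_{n+2}\ldots\le\beta_1\beta_2\ldots$ whenever $\beta_n<M$, and $(\beta_i)$ is an expansion of $1$, so the lemma yields the \emph{non-strict} bound $\sum_{i>n}\beta_i/q^i\le q^{-n}$, i.e. $\frac{\beta_{n+1}}{q}+\cdots\le 1$, whenever $\beta_n<M$. It remains only to exclude equality, and this upgrade from $\le$ to $<$ is where I expect the real work to lie.

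I would argue by contradiction: suppose $\sum_{l\ge1}\beta_{n+l}/q^l=1$ for some $n$ with $\beta_n<M$. Then the shifted sequence $\gamma_i:=\beta_{n+i}$ is itself an expansion of $1$, while the \emph{strict} hypothesis forces $\gamma<\beta$ lexicographically; let $m$ be the first index of disagreement, so $\beta_{n+k}=\beta_k$ for $k<m$ and $\beta_{n+m}<\beta_m$. Subtracting the identities $\sum_j\gamma_j/q^j=\sum_j\beta_j/q^j=1$ and cancelling the first $m-1$ equal terms gives
\begin{equation*}
\frac{\beta_m-\beta_{n+m}}{q^m}=\sum_{j>m}\frac{\beta_{n+j}}{q^j}-\sum_{j>m}\frac{\beta_j}{q^j}.
\end{equation*}
The left-hand side is at least $q^{-m}$ because $\beta_m-\beta_{n+m}\ge 1$. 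On the right, the relation $\beta_{n+m}<\beta_m\le M$ gives $\beta_{n+m}<M$, so the non-strict bound of the previous paragraph, used at index $n+m$, yields $\sum_{j>m}\beta_{n+j}/q^j=q^{-m}\sum_{l\ge1}\beta_{n+m+l}/q^l\le q^{-m}$, while $\sum_{j>m}\beta_j/q^j\ge 0$.

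The main obstacle is this squeezing step, in which three nonnegative quantities are simultaneously pinned to their extreme values. Chaining the bounds forces $q^{-m}\le\text{LHS}=\text{RHS}\le q^{-m}$, hence equality throughout: $\beta_m-\beta_{n+m}=1$, $\beta_j=0$ for every $j>m$, and $\sum_{l\ge1}\beta_{n+m+l}/q^l=1$. But every index $n+m+l$ with $l\ge1$ exceeds $m$, so that last sum is $0$, contradicting its value $1$. This rules out the equality case, establishes the strict tail estimate, and thereby identifies $(\beta_i)$ with the greedy expansion of $1$.
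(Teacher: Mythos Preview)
Your argument is correct. The reduction to the strict tail estimate is exactly right, and your contradiction for the equality case is clean: once $\sum_{j>m}\beta_j/q^j=0$ forces $\beta_j=0$ for all $j>m$, the indices $n+m+l$ with $l\ge 1$ all lie beyond $m$ (since $n\ge 1$), so the tail $\sum_{l\ge 1}\beta_{n+m+l}/q^l$ collapses to $0$, contradicting its value $1$.

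The paper proceeds a bit differently. Rather than quoting Lemma~\ref{l23} as a black box, it re-runs the block decomposition $k_0<k_1<\cdots$ from that lemma's proof; the strict hypothesis guarantees the recursion never terminates, yielding the telescoping bound $\sum_{i>n}\beta_i/q^i\le q^{-n}$ directly. Equality is then ruled out by tracing through the telescoping: it would force $(\beta_i)$ to have a last nonzero digit $\beta_m$ while simultaneously being built from the periodic block $\beta_1\ldots\beta_{m-1}\beta_m^-$, which collapses to $m=1$, $\beta_1=1$, i.e.\ $q=1$. Your approach buys modularity---you never look inside Lemma~\ref{l23}---and your squeezing argument for equality is self-contained and arguably more transparent than the paper's somewhat terse periodicity claim. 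The paper's approach, on the other hand, extracts a little more structural information (the only borderline configuration is the trivial $q=1$ case) by staying close to the explicit decomposition.
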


\begin{proof}
The case $q=1$ is obvious, so we assume henceforth that $q>1$.
Defining the sequence $k_0<k_1<\cdots$ as in the proof of Lemma
\ref{l23}, now we always obtain an infinite sequence, and hence
\begin{align*}
\sum _{i=n+1}^\infty \frac{\beta_i}{q^i}
&= \sum _{j=1}^\infty\sum_{i=1}^{k_j-k_{j-1}}
\frac{\beta_{k_{j-1}+i}}{q^{k_{j-1}+i}}\\
&\le \sum _{j=1}^\infty \Bigl(\Bigl(\sum_{i=1}^{k_j-k_{j-1}}
\frac{\beta_i}{q^{k_{j-1}+i}}\Bigr)-\frac{1}{q^{k_j}}\Bigr)\\
&\le\sum _{j=1}^\infty \Bigl(\frac{1}{q^{k_{j-1}}}-\frac{1}{q^{k_j}}\Bigr)\\
&=\frac{1}{q^n}
\end{align*}
whenever $\beta_n<M$. It remains to exclude the equality here. In the
last computation we have equality only if $(\beta_i)$ has a last nonzero term
$\beta_m$ and if the sequence $(\beta_i)$ is periodic with period
$\beta_1\ldots \beta_{m-1}\beta_m^-$, $\beta_m^-=\beta_m-1$. Since then
$(\beta_i)$ is a finite sequence, this implies $m=1$ and $\beta_m=1$.
However,this corresponds to the case $q=1$, already excluded.
\end{proof}

\begin{proof}[Proof of the theorem] The strict increasingness of both maps follows from the
definition of greedy expansions. They are onto by Lemmas \ref{l23} and
\ref{l33}. It remains to show that every greedy expansion satisfies the inequalities \eqref{32} and \eqref{33}, respectively.

For the proof of \eqref{32} it suffices to
observe that if $\beta_n<M$ for some
$n$, then we infer from the inequalities
\begin{equation*}
\frac{\beta_1}{q}+\cdots+\frac{\beta_{n-1}}{q^{n-1}}+\frac{\beta_n+1}{q^n}> 1
=\frac{\beta_1}{q}+\frac{\beta_2}{q^2}+\cdots
\end{equation*}
that 
\begin{equation*}
\frac{\beta_{n+1}}{q}+\frac{\beta_{n+2}}{q^2}+\cdots < 1.
\end{equation*}
This shows first that $(\beta_{n+i})\ne (\beta_i)$ because for the latter
sequence we have equality, and secondly that $(\beta_{n+i})\le (\beta_i)$
because $(\beta_i)$ is by definition
the lexicographically largest sequence satisfying 
\begin{equation*}
\frac{\beta_1}{q}+ \frac{\beta_2}{q^2}+\cdots \le 1.
\end{equation*}
Therefore $(\beta_{n+i})< (\beta_i)$.

The beginning of the proof of \eqref{33} is similar: if $b_n<M$ for some
$n$, then we infer from the inequalities
\begin{equation*}
\frac{b_1}{q}+\cdots+\frac{b_{n-1}}{q^{n-1}}+\frac{b_n+1}{q^n}> x
=\frac{b_1}{q}+\frac{b_2}{q^2}+\cdots
\end{equation*}
that 
\begin{equation*}
\frac{b_{n+1}}{q}+\frac{b_{n+2}}{q^2}+\cdots < 1.
\end{equation*}
This shows first that $(b_{n+i})\ne (\alpha_i)$ because for the latter sequence
we have equality. Next, if the sequence $(b_i)$ is infinite, then it also shows
that $(b_{n+i})\le (\alpha_i)$ because $(\alpha_i)$ is by definition
the lexicographically largest infinite sequence satisfying 
\begin{equation*}
\frac{\alpha_1}{q}+ \frac{\alpha_2}{q^2}+\cdots \le 1.
\end{equation*}
Therefore $(b_{n+i})< (\alpha_i)$.

For $x=0$ this conditon is obviously satisfied, too. In case $(b_i)$ has a last
nonzero digit $b_m$ we may apply the above argument to the infinite expansion
of $x$ with period $b_1\ldots b_{m-1}b_m^-$ where $b_m^-=b_m-1$, to obtain 
\begin{equation}\label{34}
(b_1\ldots b_{m-1}b_m^-)^{\infty}<(\alpha_i)
\end{equation}
with obvious notation. If \eqref{33} were not satisfied, then we would infer
from the relations
\begin{equation*}
b_1\ldots b_{m-1}b_m^- \le \alpha_1\ldots\alpha_m <b_1\ldots b_{m-1}b_m
\end{equation*}
that 
\begin{equation*}
\alpha_1\ldots\alpha_{m-1}\alpha_m=b_1\ldots b_{m-1}b_m^-
\end{equation*}
and therefore \eqref{34} may be rewritten as
\begin{equation*}
(\alpha_1\ldots\alpha_{m-1}\alpha_m)^{\infty}<(\alpha_i).
\end{equation*}
However this is impossible because $(\alpha_{n+i})\le (\alpha_i)$ for every $n$
by condition \eqref{23} of Theorem \ref{t22} and by the remark at the end of
the preceding section, and hence 
\begin{equation*}
(\alpha_i)\le (\alpha_1\ldots\alpha_{m-1}\alpha_m)^{\infty}.\qedhere
\end{equation*}
\end{proof}

Next we clarify the relations between quasi-greedy and greedy
expansions:

\begin{proposition}\label{p34}
Given $1<q\le M+1$ and $0<x\le M/(q-1)$, let us denote by $(a_i)$, $(b_i)$ the
quasi-greedy and greedy expansions of $x$, and by $(\alpha_i)$, $(\beta_i)$
those of $1$. \smallskip

(a) If $(b_i)$ has a last nonzero element $b_m$, then 
\begin{equation*}
(a_i)=b_1\ldots b_{m-1}b_m^-\alpha_1\alpha_2\ldots
\end{equation*}
with $b_m^-=b_m-1$.
Otherwise we have $(a_i)=(b_i)$.
\smallskip

(b) If $(\beta_i)$ has a last nonzero element $\beta_m$, then $(\alpha_i)$ is
periodic with the smallest period $\beta_1\ldots\beta_{m-1}\beta_m^-$ where
$\beta_m^-=\beta_m-1$, i.e., 
\begin{equation*}
(\alpha_i)=(\beta_1\ldots\beta_{m-1}\beta_m^-)^\infty.
\end{equation*}
Otherwise we have $(\beta_i)=(\alpha_i)$, and this sequence is
periodic only in the extreme case $q=M+1$ when
$(\beta_i)=(\alpha_i)=M^{\infty}$.
\end{proposition}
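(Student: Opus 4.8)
The plan is to reduce both parts to the interplay between the lexicographically largest expansion and the lexicographically largest \emph{infinite} expansion, and then to obtain part (b) by specializing part (a) to $x=1$. Throughout I would use that, by the remark at the end of Section \ref{s2}, condition \eqref{23} holds for \emph{all} $n$ once $q\le M+1$.

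For part (a) I would first dispose of the case in which $(b_i)$ is infinite: we always have $(a_i)\le(b_i)$ since $(a_i)$ is an expansion and $(b_i)$ is the largest one, and if $(b_i)$ is infinite it is itself an infinite expansion, so $(b_i)\le(a_i)$ and the two coincide. When $(b_i)$ has a last nonzero digit $b_m$, I would introduce the candidate
\begin{equation*}
(c_i):=b_1\ldots b_{m-1}b_m^-\alpha_1\alpha_2\ldots,\qquad b_m^-=b_m-1,
\end{equation*}
and make two checks. First, a short telescoping computation using $\sum_i\alpha_i/q^i=1$ shows $(c_i)$ is an infinite expansion of $x$, since the tail $\sum_i\alpha_i/q^{m+i}=q^{-m}$ exactly compensates the decrement of the $m$-th digit. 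Second, I would verify \eqref{24} for $(c_i)$, i.e. $c_{n+1}c_{n+2}\ldots\le\alpha_1\alpha_2\ldots$ whenever $c_n<M$: for $n\ge m$ the tail of $(c_i)$ is a tail of $(\alpha_i)$ and this is \eqref{23}, while for $n<m$ the greedy inequality \eqref{33} gives $b_{n+1}\ldots b_m0^\infty<\alpha_1\alpha_2\ldots$, and $(c_{n+i})$ is even smaller because it agrees with $b_{n+1}\ldots b_{m-1}$ and then carries $b_m^-<b_m$. Lemma \ref{l23} then identifies $(c_i)$ as the quasi-greedy expansion of $x$, so $(a_i)=(c_i)$.

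For part (b) I would specialize part (a) to $x=1$, $(a_i)=(\alpha_i)$, $(b_i)=(\beta_i)$. If $(\beta_i)$ is finite with last nonzero digit $\beta_m$, part (a) gives $(\alpha_i)=\beta_1\ldots\beta_{m-1}\beta_m^-\alpha_1\alpha_2\ldots$, which term by term says exactly that $\alpha_{m+i}=\alpha_i$ for all $i$, i.e. $(\alpha_i)=(\beta_1\ldots\beta_{m-1}\beta_m^-)^\infty$ with period length $m$, and that $\beta_m=\alpha_m+1$. If $(\beta_i)$ is infinite, part (a) gives $(\beta_i)=(\alpha_i)$ at once.

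The remaining work, and the main obstacle, is the minimality and rigidity of the periods. To see that $m$ is the smallest period in the finite case, I would assume a period $p\mid m$ with $p<m$, write $m=kp$ with $k\ge2$, and rule out both alternatives: if $\alpha_p=M$ then periodicity forces $\beta_m=\alpha_m+1=\alpha_p+1=M+1>M$, impossible; if $\alpha_p<M$ then, since $\beta_{(k-1)p}=\alpha_p<M$, the greedy inequality \eqref{32} at $n=(k-1)p$ would require $\beta_{(k-1)p+1}\ldots<\beta_1\ldots$, whereas the left-hand side begins $\alpha_1\ldots\alpha_{p-1}(\alpha_p+1)$ and the right-hand side begins $\alpha_1\ldots\alpha_{p-1}\alpha_p$, so it is in fact strictly larger. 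In the infinite case, to show $(\beta_i)=(\alpha_i)$ is periodic only when $q=M+1$, I would suppose it periodic with minimal period $p$; since $\beta_{p+1}\beta_{p+2}\ldots=\beta_1\beta_2\ldots$, the strictness in \eqref{32} forces $\beta_p=M$, and then, taking the largest $j\le p$ with $\beta_j<M$ (necessarily $j<p$), \eqref{32} at $n=j$ would compare the sequence $M^{p-j}\beta_1\beta_2\ldots$ (namely $p-j$ copies of $M$ followed by $(\beta_i)$) with $\beta_1\beta_2\ldots$; prefixing maximal digits cannot decrease a sequence and strictly increases it unless the sequence is $M^\infty$, so no such $j$ exists and $(\beta_i)=M^\infty$, which as the expansion of $1$ means precisely $q=M+1$. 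The converse, that $q=M+1$ yields $(\beta_i)=(\alpha_i)=M^\infty$, is immediate from the recursive definitions.
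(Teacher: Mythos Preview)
Your argument is correct, and it runs parallel to the paper's but with a different mechanism in part (a). The paper proves (a) by a direct maximality argument: given any infinite expansion $(c_i)$ of $x$, the inequality $(c_i)<(b_i)$ forces $c_1\ldots c_m\le b_1\ldots b_{m-1}b_m^-$, and in case of equality the tail $c_{m+1}c_{m+2}\ldots$ is an infinite expansion of $1$, hence $\le(\alpha_i)$. You instead build the candidate and verify the characterizing condition \eqref{24} via Lemma \ref{l23}, feeding in the already-proved greedy inequality \eqref{33}; this is a legitimate alternative that trades a short direct comparison for an appeal to the structural characterization. For (b), the paper reproves the maximality argument iteratively rather than specializing (a), but your specialization is cleaner and equally valid. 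Your minimality argument is essentially the paper's (a violation of \eqref{32} at $n=m-p$), and in fact you are slightly more careful in isolating the case $\alpha_p=M$ before invoking \eqref{32}. For the ``periodic only if $q=M+1$'' clause the paper argues with the quasi-greedy inequality \eqref{23} shifted by $m-1$ to force $\alpha_1=\cdots=\alpha_{m-1}=M$, whereas you use the greedy inequality \eqref{32} at the last sub-$M$ index together with the observation that prefixing $M$'s never decreases a sequence; both routes terminate at $M^\infty$.
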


\begin{proof}\mbox{}

(a) The only nontrivial property is that if $(b_i)$ has a last nonzero element
$b_m$, then $(c_i)\le b_1\ldots b_{m-1}b_m^-\alpha_1\alpha_2\ldots$ for every
infinite expansion of $x$. Since $(c_i)<(b_i)$, we must have $c_1\ldots 
c_m\le b_1\ldots b_{m-1}b_m^-$. If we have equality here, then
$c_{m+1}c_{m+2}\ldots$ is an infinite expansion of $1$, so that
$c_{m+1}c_{m+2}\ldots\le
\alpha_1\alpha_2\ldots$.
\smallskip

(b) If $(\beta_i)$ has a last nonzero element $\beta_m$, then
$(\beta_1\ldots\beta_{m-1}\beta_m^-)^\infty$ is clearly an infinite expansion
of $1$. (Observe that the case $m=1$ and $\beta_1=1$ is excluded by our
assumption $q>1$.) If $(\gam_i)$ is an infinite expansion of $1$, then
$\gam_1\ldots\gam_m<\beta_1\ldots\beta_m$, so that
$\gam_1\ldots\gam_m\le\beta_1\ldots\beta_{m-1}b_m^-$. In case of equality
$\gam_{m+1}\gam_{m+2}\ldots$ is again an infinite expansion of $1$, so that 
$\gam_{m+1}\gam_{m+2}\ldots\le\beta_1\ldots\beta_{m-1}b_m^-$. Iterating this
reasoning we find that $(\gam_i)\le (\beta_1\ldots\beta_{m-1}\beta_m^-)^\infty$.
This proves that $(\beta_1\ldots\beta_{m-1}\beta_m^-)^\infty$ is the largest
infinite expansion of  $1$.

The latter sequence cannot have a shorter period of length $k<m$ because then
$k$ should divide $m$ and condition \eqref{32} would be violated for $n=m-k$:
putting $\beta_k^+=\beta_k+1$ we would have
\begin{equation*}
\beta_{m-k+1}\ldots\beta_m=\beta_1\ldots\beta_{k-1}\beta_k^+>\beta_1\ldots\beta_
k.
\end{equation*}

Finally, if $(\beta_i)=(\alpha_i)$ has a smallest period
$\beta_1\ldots\beta_m$, then $\beta_m=M$, for otherwise
$\beta_1\ldots\beta_{m-1}\beta_m^+0^{\infty}$ would be a larger expansion of
$1$. Then applying \eqref{23} for $(\beta_i)=(\alpha_i)$ we have 
\begin{equation*}
\alpha_m\alpha_{m+1}\ldots\alpha_{2m-1}\le \alpha_1\alpha_2\ldots\alpha_m,
\end{equation*}
i.e.,
\begin{equation*}
M\alpha_1\ldots\alpha_{m-1}\le \alpha_1\alpha_2\ldots\alpha_m,
\end{equation*}
This yields successively $\alpha_1=M$, $\alpha_2=M$, \ldots, $\alpha_{m-1}=M$,
so that $(\beta_i)=(\alpha_i)=M^{\infty}$.
\end{proof}

\begin{remarks}\mbox{}

\begin{itemize}
\item The proposition and its proof allow us to determine all expansions of $x$
between $(a_i)$ and $(b_i)$:

\begin{itemize}
\item If $(b_i)$ is infinite, then $(a_i)=(b_i)$.

\item If $(b_i)$ has a last nonzero element $b_m$ and if $(\beta_i)$ is
infinite, then $(a_i)=b_1\ldots b_{m-1}b_m^-\beta_1\beta_2\ldots$ is the second
largest expansion of $x$.

\item If both $(b_i)$ and $(\beta_i)$ have last nonzero element $b_m$ and
$\beta_n$, then the expansions $(c_i)$ of $x$ satisfying $(a_i)<(c_i)<(b_i)$
are given by the decreasing sequence
\begin{equation*}
b_1\ldots b_{m-1}b_m^-(\beta_1\ldots \beta_{n-1}\beta_n^-)^N(\beta_1\ldots
\beta_n)0^{\infty},\quad N=0,1,\ldots .
\end{equation*}
\end{itemize}

\item The results of this section extend to $M=\infty$, too. Then the
conditions $M\ge q-1$, $x\le M/(q-1)$, $b_n<M$ and $\beta_n<M$ may be omitted. The greedy expansions in case $M=\infty$ are exactly the beta-expansions introduced by R\'enyi \cite{Ren1957}.

\item If $0\le x<(M+1)/q$, then he greedy expansion of $x$ does not change by changing $M$ to infinity. Therefore condition
\eqref{32} of Theorem \ref{t32} is satisfied for {\em all} $n$, and condition
\eqref{33} is satisfied for {\em all} $n$ if $0\le x\le (M+1)/q$ (the obvious
cases $M= q-1$ and $x=(M+1)/q$ can be checked separately).
\end{itemize}
\end{remarks}

We end this paper by giving again the explicit statements concerning the case $M=\infty$.

\begin{definition}
The \emph{greedy expansion} of a nonnegative real number $x$ is by definition the lexicographically largest sequence $(b_i)$ of nonnegative integers satisfying
\begin{equation*}
\frac{b_1}{q}+ \frac{b_2}{q^2}+\cdots \le x.
\end{equation*}
\end{definition}

Equivalently, the sequence $(b_i)$ is defined recursively as follows: if $b_k$ has already
been defined for all $k<n$ (no assumption if $n=1$), then $b_n$ is the
largest integer satisfying the inequality
\begin{equation*}
\frac{b_1}{q}+\cdots + \frac{b_n}{q^n}\le x.
\end{equation*}

We have the following

\begin{proposition}\label{p35}
If $x\ge 0$, then $(b_i)$ is an expansion of $x$, i.e.,
\begin{equation*}
\frac{b_1}{q}+ \frac{b_2}{q^2}+\cdots = x.
\end{equation*}

Furthermore, we have $b_n< q$ for all $n\ge 2$.
\end{proposition}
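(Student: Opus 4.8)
The plan is to mirror the argument used for the quasi-greedy expansion in Proposition \ref{p24}, with the roles of the strict and non-strict inequalities interchanged. The starting point is the recursive definition, from which I would first extract, for every $n\ge 1$, the two-sided estimate
\begin{equation*}
\frac{b_1}{q}+\cdots + \frac{b_n}{q^n}\le x< \frac{b_1}{q}+\cdots + \frac{b_n}{q^n}+\frac{1}{q^n}.
\end{equation*}
The left inequality is just the defining inequality for $b_n$; the right one expresses the maximality of $b_n$, since replacing $b_n$ by $b_n+1$ would increase the partial sum by exactly $1/q^n$ and thereby violate the condition $\le x$.

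From this estimate I would subtract the partial sum to obtain
\begin{equation*}
0\le x-\left(\frac{b_1}{q}+\cdots + \frac{b_n}{q^n}\right)<\frac{1}{q^n}.
\end{equation*}
Since $q>1$, the right-hand side tends to $0$ as $n\to\infty$, so the partial sums converge to $x$ and $(b_i)$ is indeed an expansion of $x$. I should emphasize that, in contrast to the quasi-greedy case, no claim is made here that $(b_i)$ is infinite: the convergence argument is valid whether or not the expansion terminates, which is precisely why only equality of the series is asserted.

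For the digit bound $b_n<q$ with $n\ge 2$, I would combine the defining inequality at index $n+1$ with the maximality inequality at index $n$:
\begin{equation*}
\frac{b_1}{q}+\cdots + \frac{b_n}{q^n}+\frac{b_{n+1}}{q^{n+1}}\le x< \frac{b_1}{q}+\cdots + \frac{b_n}{q^n}+\frac{1}{q^n}.
\end{equation*}
Cancelling the common partial sum yields $b_{n+1}/q^{n+1}<1/q^n$, that is $b_{n+1}<q$ for every $n\ge 1$, which is the assertion after reindexing. I expect no real obstacle in this argument; the only point requiring care is the correct placement of the strict inequality on the right and the non-strict one on the left, which is dual to the pattern in Proposition \ref{p24} and is forced by the use of $\le$ (rather than $<$) in the greedy recursion.
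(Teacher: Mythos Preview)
Your proof is correct and follows essentially the same approach as the paper: deriving the two-sided estimate from the recursive definition, using $1/q^n\to 0$ to obtain the equality, and then comparing the inequalities at consecutive indices to bound $b_{n+1}$. The additional commentary on the placement of the strict and non-strict inequalities is accurate and matches the paper's reasoning.
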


\begin{proof}
We have
\begin{equation*}
\frac{b_1}{q}+\cdots + \frac{b_n}{q^n}\le x< \frac{b_1}{q}+\cdots + \frac{b_n}{q^n}+\frac{1}{q^n}
\end{equation*}
for all $n\ge 1$ by definition. Hence
\begin{equation*}
0\le x-\left( \frac{b_1}{q}+\cdots + \frac{b_n}{q^n}\right) < \frac{1}{q^n}
\end{equation*}
for all $n$ and the right-hand side converges to zero.

It follows from the inequalities 
\begin{equation*}
\frac{b_1}{q}+\cdots + \frac{b_n}{q^n}+\frac{b_{n+1}}{q^{n+1}}\le x< \frac{b_1}{q}+\cdots + \frac{b_n}{q^n}+\frac{1}{q^n}
\end{equation*}
that
\begin{equation*}
\frac{b_{n+1}}{q^{n+1}}< \frac{1}{q^n}
\end{equation*}
and therefore $b_{n+1}<q$ for $n=1,2,\ldots .$
\end{proof}

The following result is essentially due to Parry \cite{Par1960}; following Dar\'oczy and K\'atai \cite{DarKat1995} we simplify its formulation by using quasi-greedy expansions. It is convenient to introduce again the greedy expansion of $1$ for $q=1$ by setting
$(\beta_i)=10^{\infty}$, i.e., $\beta_1=1$ and $\beta_i=0$ for all $i>1$.

\begin{theorem}\label{t36}\mbox{}

(a) The map $q\mapsto (\beta_i)$, where $(\beta_i)$ denotes the 
greedy expansion of $1$, is a strictly increasing one-to-one
correspondence
between the interval $[1,\infty)$ and the set of all sequences
satisfying
\begin{equation*}
\beta_{n+1}\beta_{n+2}\ldots< \beta_1\beta_2\ldots
\quad\text{for all}\quad n.
\end{equation*}

(b) Fix $q>1$ arbitrarily and denote by $(\alpha_i)$ the
{\em quasi-greedy} expansion of $1$. The map $x\mapsto (b_i)$, where $(b_i)$
 denotes the greedy expansion of $x$,  is a strictly increasing
one-to-one correspondence between the interval $[0,\infty)$ and the set of
all sequences satisfying
\begin{equation*}
b_{n+1}b_{n+2}\ldots< \alpha_1\alpha_2\ldots
\quad\text{for all}\quad n.
\end{equation*}
\end{theorem}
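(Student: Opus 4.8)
The plan is to read Theorem \ref{t36} as the specialization $M=\infty$ of Theorem \ref{t32}, and to obtain it by rerunning the proofs of Theorem \ref{t32} and of its supporting Lemmas \ref{l23} and \ref{l33} with $M$ set equal to $\infty$. Under this specialization every side condition of the form $M\ge q-1$, $x\le M/(q-1)$, ``whenever $\beta_n<M$'' or ``whenever $b_n<M$'' is vacuous or automatically satisfied, so the inequalities \eqref{32} and \eqref{33} are now imposed for \emph{all} $n$, while the parameter ranges $[1,M+1]$ and $[0,M/(q-1)]$ open up to $[1,\infty)$ and $[0,\infty)$. Thus the proof should be essentially a transcription, and the genuine task is to confirm that no step of Theorem \ref{t32} used the finiteness of $M$, and that the objects involved remain well defined.

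I would verify the three ingredients of Theorem \ref{t32} in turn. The strict increasingness of both maps follows verbatim from the recursive definition of greedy expansions. For the necessity of the inequalities I would reproduce the tail computation: from the greedy step $\frac{b_1}{q}+\cdots+\frac{b_{n-1}}{q^{n-1}}+\frac{b_n+1}{q^n}>x=\sum_i b_i/q^i$ one gets $\frac{b_{n+1}}{q}+\frac{b_{n+2}}{q^2}+\cdots<1$, which already shows $(b_{n+i})\ne(\alpha_i)$ and, when $(b_i)$ is infinite, $(b_{n+i})\le(\alpha_i)$ since $(\alpha_i)$ is the lexicographically largest infinite expansion of $1$; hence the strict inequality holds for every $n$. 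The one delicate subcase is that of a finite $(b_i)$ with last nonzero digit $b_m$: here I would pass, exactly as in Theorem \ref{t32}, to the infinite expansion of $x$ with period $b_1\ldots b_{m-1}b_m^-$, derive \eqref{34}, and exclude the bad alternative using $(\alpha_{n+i})\le(\alpha_i)$ for all $n$, which is now furnished by Theorem \ref{t25}(a) in place of \eqref{23}. Finite greedy expansions really do occur when $M=\infty$, so this subcase cannot be skipped.

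The main obstacle I anticipate lies in the onto direction, specifically in well-definedness. Since the admissible digits are now unbounded nonnegative integers, given a sequence $(b_i)$ with $b_{n+1}b_{n+2}\ldots<\alpha_1\alpha_2\ldots$ for all $n\ge1$ I must first guarantee that $x:=\sum_i b_i/q^i$ is a finite real before treating it as the number being expanded. This is secured by the admissibility condition itself, which forces $b_i\le\alpha_1$ for every $i\ge2$ (only $b_1$ may be large), so that $\sum_{i\ge2}b_i/q^i$ is dominated by a convergent geometric series; equivalently one invokes the $M=\infty$ form of Lemma \ref{l23}, giving the tail estimate $\sum_{i>n}b_i/q^i\le q^{-n}$. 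Once $x$ is finite, this same tail estimate together with the greedy algorithm forces $(b_i)$ to coincide with the greedy expansion of $x$, yielding surjectivity in part~(b); for part~(a) one locates, by the usual continuity and monotonicity of $q\mapsto\sum\beta_i/q^i$, the unique base with $\sum\beta_i/q^i=1$ and then applies Lemma \ref{l33}. Finally I would observe that the extreme case $q=M+1$ with $(\beta_i)=(\alpha_i)=M^\infty$ appearing in Proposition \ref{p34} simply disappears, there being no largest admissible base when $M=\infty$.
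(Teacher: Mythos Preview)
Your proposal is correct and follows essentially the same route as the paper, which simply says to repeat the proofs of Lemma \ref{l33} and Theorem \ref{t32} with the clauses ``whenever $\beta_n<M$'' replaced by ``for all $n\ge 1$''. Your additional care about the convergence of $\sum b_i/q^i$ in the onto direction (bounding $b_i\le\alpha_1$ for $i\ge 2$ via the admissibility condition) is a detail the paper leaves implicit, but it does not change the approach.
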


\begin{proof}
We may repeat the proof of Lemma \ref{l33} (by changing the words ``whenever $\beta_n<M$'' to ``for all $n\ge 1$'') and Theorem \ref{t32} (by changing the words ``if $\beta_n<M$'' to ``for all $n\ge 1$'').
\end{proof}

Next we clarify the relations between quasi-greedy and greedy
expansions:

\begin{proposition}\label{p37}
Given $q>1$ and $x>0$, let us denote by $(a_i)$, $(b_i)$ the
quasi-greedy and greedy expansions of $x$, and by $(\alpha_i)$, $(\beta_i)$
those of $1$. \smallskip

(a) If $(b_i)$ has a last nonzero element $b_m$, then 
\begin{equation*}
(a_i)=b_1\ldots b_{m-1}b_m^-\alpha_1\alpha_2\ldots
\end{equation*}
with $b_m^-=b_m-1$.
Otherwise we have $(a_i)=(b_i)$.
\smallskip

(b) If $(\beta_i)$ has a last nonzero element $\beta_m$, then $(\alpha_i)$ is
periodic with the smallest period $\beta_1\ldots\beta_{m-1}\beta_m^-$ where
$\beta_m^-=\beta_m-1$, i.e., 
\begin{equation*}
(\alpha_i)=(\beta_1\ldots\beta_{m-1}\beta_m^-)^\infty.
\end{equation*}
Otherwise we have $(\beta_i)=(\alpha_i)$, and this sequence is not
periodic.
\end{proposition}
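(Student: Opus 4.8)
The plan is to repeat the proof of Proposition~\ref{p34}, simply dropping the hypotheses $M\ge q-1$, $x\le M/(q-1)$, $b_n<M$ and $\beta_n<M$ (all automatic for $M=\infty$), and to change only the very last step, which concerns periodicity. For part~(a) I expect the argument of Proposition~\ref{p34}~(a) to carry over verbatim: it uses nothing beyond the facts that the greedy expansion of $x$ is the lexicographically largest expansion of $x$ and that the quasi-greedy expansion of $1$ is the lexicographically largest \emph{infinite} expansion of $1$, and both remain valid for $M=\infty$ by Propositions~\ref{p35} and~\ref{p24}. Concretely, if $(b_i)$ ends in a last nonzero digit $b_m$ and $(c_i)$ is any infinite expansion of $x$, then $(c_i)<(b_i)$ forces $c_1\ldots c_m\le b_1\ldots b_{m-1}b_m^-$; in case of equality $c_{m+1}c_{m+2}\ldots$ is an infinite expansion of $1$, hence $\le\alpha_1\alpha_2\ldots$, which identifies $b_1\ldots b_{m-1}b_m^-\alpha_1\alpha_2\ldots$ as $(a_i)$.

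For part~(b), suppose first that $(\beta_i)$ has a last nonzero digit $\beta_m$. Then $(\beta_1\ldots\beta_{m-1}\beta_m^-)^\infty$ is an infinite expansion of $1$, and I would reproduce the reasoning of Proposition~\ref{p34}~(b) to show that it is the \emph{largest} infinite expansion of $1$ --- hence equals $(\alpha_i)$ --- and that $\beta_1\ldots\beta_{m-1}\beta_m^-$ is its smallest period (the degenerate case $m=1$, $\beta_1=1$ being excluded by $q>1$). If instead $(\beta_i)$ has no last nonzero digit, then it is infinite, so applying part~(a) with $x=1$ immediately gives $(\alpha_i)=(\beta_i)$.

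The only genuinely new point, and the single place where $M=\infty$ alters the conclusion of Proposition~\ref{p34}, is the claim that in this last case $(\alpha_i)=(\beta_i)$ is \emph{not} periodic; here the exceptional sequence $M^\infty$ arising for $q=M+1$ in the finite case has no counterpart. I would argue by contradiction: were $(\beta_i)$ periodic with some period length $m$, then periodicity together with $\sum_i\beta_i q^{-i}=1$ would force
\begin{equation*}
\frac{\beta_1}{q}+\cdots+\frac{\beta_m}{q^m}=1-\frac{1}{q^m},
\end{equation*}
so that $\beta_1\ldots\beta_{m-1}(\beta_m+1)0^\infty$ is again an expansion of $1$ and is lexicographically larger than $(\beta_i)$, contradicting the maximality of the greedy expansion. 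The increment $\beta_m\mapsto\beta_m+1$ is admissible precisely because every nonnegative integer is a legal digit when $M=\infty$; for finite $M$ it would fail exactly when $\beta_m=M$, which is what produces the periodic sequence $M^\infty$ in Proposition~\ref{p34}. This is in fact slightly simpler than the finite case, since no smallest-period analysis is needed. The main thing I expect to have to check carefully is the displayed identity for periodic expansions and the verification that the incremented sequence is strictly larger lexicographically; the rest is bookkeeping already carried out in Proposition~\ref{p34}.
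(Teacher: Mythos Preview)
Your proposal is correct and follows essentially the same route as the paper: repeat the proof of Proposition~\ref{p34} verbatim for part~(a) and for most of part~(b), and replace only the final periodicity argument by the observation that a hypothetical period $\beta_1\ldots\beta_m$ would yield the strictly larger expansion $\beta_1\ldots\beta_{m-1}(\beta_m+1)0^\infty$ of $1$, contradicting maximality. Your displayed identity makes explicit the computation the paper leaves implicit, but the key idea is identical.
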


\begin{proof}
We may repeat the proof of Proposition \ref{p34}, by shortening its last paragraph by observing that if $(\beta_i)=(\alpha_i)$ had a period of length $m$, then $\beta_1\ldots\beta_{m-1}\beta_m^+0^{\infty}$ would also be an expansion of $1$, contradicting the maximality of $(\beta_i)$.
\end{proof}


\begin{thebibliography}{99}

\bibitem{DarKat1995} Z. Dar\'oczy and I. K\'atai, {\em On the
structure of univoque numbers}, Publ. Math. Debrecen 46 (1995), 3-4,
385-408.

\bibitem{DevKom2006}  V. Komornik and M. De Vries,
{\em Unique expansions of real numbers},
arXiv: math/0609708.

\bibitem{ErdJooKom55} P. Erd\H os, I. Jo\'o and V. Komornik,
{\em Characterization of the unique expansions $1=\sum
q^{-n_i}$ and related problems},   Bull. Soc. Math. France  118
(1990),
377-390.

\bibitem{KomLor116} V. Komornik and P. Loreti, {\em Subexpansions,
superexpansions
and uniqueness properties in non-integer bases}, Period. Math.
Hungar 44 (2002), 2, 195-216.

\bibitem{KomLor125} V. Komornik and P. Loreti, 
{\em On the  topological structure of univoque sets},
 J. Number Theory, 122 (2007), 157--183.

\bibitem{KovMak1991} B. Kov\'acs  and Gy. Maksa, 
{\em Interval-filling sequences of order $N$ and a representation of real
numbers
in canonical number systems}, Publ. Math. Debrecen 39 (1991), 3-4, 305-313.

\bibitem{Par1960} W. Parry,
{\em On the $\beta$-expansion of real numbers},
Acta  Math. Hungar. 11 (1960), 401-416.

\bibitem{Ped} M. Pedicini, 
{\em Greedy expansions and sets with deleted digits},
Theoretical Computer Science 332 (2005) 313--336. 

\bibitem{Ren1957} A. R\'enyi,
{\em Representations for real numbers and their ergodic properties},
Acta  Math. Hungar. 8 (1957), 477-493.


\end{thebibliography}
\end{document}